\documentclass[12pt,a4paper]{article}

\usepackage[a4paper,margin=1in]{geometry}
\usepackage[T1]{fontenc}
\usepackage[utf8]{inputenc}
\usepackage{amsmath,amssymb,amsthm,mathtools}
\usepackage{graphicx}
\usepackage{float}
\usepackage{caption}
\usepackage{authblk}
\usepackage[colorlinks=true,linkcolor=blue,citecolor=blue,urlcolor=blue]{hyperref}

\numberwithin{equation}{section}

\newtheorem{theorem}{Theorem}
\newtheorem{lemma}{Lemma}
\newtheorem{definition}{Definition}
\newtheorem*{ektheorem}{Theorem (Edelman--Kostlan)}
\theoremstyle{remark}
\newtheorem*{remark}{Remark}

\DeclareMathOperator{\erf}{erf}

\newcommand{\Prob}{\mathbb{P}}
\newcommand{\Exp}{\mathbb{E}}

\title{\bfseries Probabilistic Zero Bounds of Certain Random Polynomials}

\author[1]{Sajad Ahmad Sheikh}
\author[1]{Mohammad Ibrahim Mir}
\affil[1]{Department of Mathematics, University of Kashmir, South Campus, Anantnag 192101, Jammu and Kashmir, India}
\affil{\href{mailto:sajadsheikh@uok.edu.in}{\texttt{sajadsheikh@uok.edu.in}}; \href{mailto:ibrahimmath80@gmail.com}{\texttt{ibrahimmath80@gmail.com}}}

\date{}

\begin{document}

\maketitle

\begin{abstract}
This paper introduces the notion of probabilistic zero bounds for random polynomials. It presents new results regarding the probabilistic bounds of random polynomials whose coefficients are independently and identically distributed as standard normal variates. Additionally, the paper provides a clear exposition of the developed methodology. To establish our results, we develop a novel approach utilizing the classical Cauchy's bounds for the zeros of a deterministic polynomial with complex coefficients. We also corroborate our analytical results with extensive simulations. The methodology developed in the paper can potentially be applied to a broad class of problems regarding bounds and the distribution of zeros in the theory of random polynomials.
\end{abstract}

\noindent\textbf{Mathematics Subject Classification 2000:} 26C10, 30C15, 60E15, 60E05, 62E17.

\medskip
\noindent\textbf{Keywords:} Random polynomials; probability density function; normal random variable; Cauchy distribution; probabilistic bounds; error function.
\section{Introduction}

The field of random polynomials, which originated from the seminal work of Bloch and Polya in 1932, has since developed into a thriving area of research \cite{Bloch1932,BharuchaReidSambandham1986}. Their pioneering contributions paved the way for foundational investigations by Littlewood and Offord \cite{BharuchaReidSambandham1986,LittlewoodOfford1938,Hammersley1956}. In the 1940s, Kac devised a formula, now referred to as Kac's formula, which stands as one of the most significant outcomes in the field \cite{Kac1943}. This formula furnishes an asymptotic expression for the expected number of real roots of high-degree random polynomials. The topic of the distribution of zeros of random polynomials has been extensively studied, with contributions coming from works such as \cite{LittlewoodOfford1938,Hammersley1956,Bloch1932,ErdosTuran1950,Rice1939,EdelmanKostlan1995}, as well as the detailed exposition by Bharucha-Reid and Sambandham \cite{BharuchaReidSambandham1986}. As per Kac's formula, the expected number of real zeros of a degree $n$ polynomial with independent standard normal coefficients is
\begin{equation*}
\begin{aligned}
\Exp N_n(\mathbb{R})
&=\frac{1}{\pi}\int_{-\infty}^{\infty}
\sqrt{\frac{1}{(t^2-1)^2}-\frac{(n+1)^2t^{2n}}{(t^{2n+2}-1)^2}}  dt \\
&=\frac{4}{\pi}\int_0^1
\sqrt{\frac{1}{(1-t^2)^2}-\frac{(n+1)^2t^{2n}}{(1-t^{2n+2})^2}} dt .
\end{aligned}
\end{equation*}
Kac \cite{Kac1943} further showed that
\begin{equation*}
\Exp N_n(\mathbb{R})\sim \frac{2}{\pi}\log n .
\end{equation*}
Several researchers have sharpened Kac's original estimate; one can see \cite{BharuchaReidSambandham1986} for a rigorous treatment of this estimate.

In their elegant paper, Edelman and Kostlan \cite{EdelmanKostlan1995} provide an elementary geometric derivation of the Kac integral formula, similar to Buffon's needle problem, for the expected number of real zeros of a random polynomial with independent standard normally distributed coefficients. When the coefficients of the polynomial are independent and conform to a standard normal distribution, Edelman and Kostlan \cite{Kostlan1993} proved the following.

\begin{ektheorem}
Let $P_n$ be the Kac polynomial of degree $n$. Then, as $n\to\infty$, the expected number of real zeros of the polynomial is given by
\begin{equation}
\Exp N_n(\mathbb{R})=\frac{2}{\pi}\log(n)+0.6257358072\ldots+\frac{2}{n\pi}+O\left(\frac{1}{n^2}\right).
\end{equation}
\end{ektheorem}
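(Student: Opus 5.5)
Starting from the Kac integral displayed above, I would write $\Exp N_n(\mathbb{R})=\frac{4}{\pi}\int_0^1 \rho_n(t)\,dt$ with
\[
\rho_n(t)=\sqrt{\frac{1}{(1-t^2)^2}-\frac{(n+1)^2t^{2n}}{(1-t^{2n+2})^2}} .
\]
Everything lives near $t=1$, so I substitute $t=1-x/(n+1)$, or equivalently $t=e^{-s/(2(n+1))}$ in the Edelman--Kostlan style. This turns $t^{2n+2}$ into $e^{-s}$ exactly, and reduces the integrand to an expression in $s$ that involves a universal function plus corrections in powers of $1/n$.

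In these variables the second term becomes $(n+1)^2 e^{-s}/(1-e^{-s})^2$ up to the factor $t^{-2}$. The first term becomes $1/(1-e^{-s/(n+1)})^2$, which I expand as $(n+1)^2/s^2+(n+1)/s+\tfrac{5}{12}+O(s^2/n^2)$ for bounded $s/n$. Factoring out $(n+1)^2$ and taking the square root, the integrand splits into a main term $(n+1)f(s)$ and a correction of order $1/(n+1)$. Here
\[
f(s)=\sqrt{\frac{1}{s^2}-\frac{1}{4\sinh^2(s/2)}} ,
\]
which is bounded at $0$ (it tends to $1/(2\sqrt3)$) and is asymptotic to $1/s$ at infinity. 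Together with the Jacobian $dt=\frac{t}{2(n+1)}ds$, the main term contributes $\frac{2}{\pi}\int_0^{2(n+1)\log(\cdot)}f(s)\,ds$ over a range of length $\sim 2n\log$-scale. More precisely, $s$ runs from $0$ to $\infty$ as $t$ runs from $1$ to $0$, but the approximation is only valid for $s\ll n$. So I split the integral at $t=1-\epsilon$ (or at $s=n^{\delta}$, say).

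On the outer piece $t\in[0,1-\epsilon]$, the second term is exponentially small in $n$. The integral of $1/(1-t^2)$ is elementary, which gives an explicit logarithm plus constants. On the inner piece I write $\int_0^{S}f=\log S+C_0+o(1)$, with $C_0=\int_0^1 f+\int_1^\infty(f-1/s)$. Matching the two pieces, the $S$-dependence cancels, leaving $\frac{2}{\pi}\log n+\frac{2}{\pi}(\log 2+C_0)$ plus the $\frac{4}{\pi}\int_0^1(\tfrac{1}{1-t^2}-\tfrac{1}{1-t})$-type constants, which fixes $0.6257358072\ldots$ as a closed-form integral. The $2/(n\pi)$ term comes from collecting the first-order corrections: the $(n+1)/s$ term in the expansion, the factor $t$ in $dt$, and the shift $\log(n+1)=\log n+1/n+O(n^{-2})$.

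The main obstacle is controlling the $1/n$ correction uniformly, so that the error is genuinely $O(n^{-2})$. Near $s=0$ both terms of $\rho_n^2$ blow up and cancel, so the expansion must be done on $\rho_n^2$ before taking square roots, using $f(s)\ge c>0$ to linearize the square root. For large $s$ the relative corrections grow like $s/n$. I would handle this by using the exact identity for the first term on $[0,1-\epsilon]$ rather than expanding, and by verifying that the first-order correction integrand is integrable, decaying like $1/s^2$, so that its integral over $[0,\infty)$ converges to a constant. That constant, I expect, combines with the boundary terms to give exactly $2/\pi$.
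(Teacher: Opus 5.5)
The paper gives no proof of this statement; it quotes it from Edelman--Kostlan, so your proposal has to stand on its own. Your skeleton is the standard route and does give $\frac{2}{\pi}\log n+C_1$, where $C_1=0.6257358072\ldots$: substitute $t=e^{-s/(2(n+1))}$, cut, and match $\log S$ against the outer $\frac{2}{\pi}\log\frac{1+t}{1-t}$. Two repairs are needed for the constant.
\begin{itemize}
\item $\int_0^1\bigl(\frac{1}{1-t^2}-\frac{1}{1-t}\bigr)\,dt$ diverges. The correct comparison function is $\frac{1}{2(1-t)}$, which gives $C_1=\frac{2}{\pi}(\log 4+C_0)$. This agrees with Edelman--Kostlan's $\frac{2}{\pi}\bigl(\log 2+\int_0^\infty(f(s)-\frac{1}{s+2})\,ds\bigr)$.
\item The expansion of the first term continues with $\frac{s}{12(n+1)}$, so its remainder is $O(s/n)$, not $O(s^2/n^2)$. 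This is harmless.
\end{itemize}

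The genuine gap is your account of the $\frac{2}{n\pi}$ term. Your list of first-order sources omits the factor $t^{-2}=1+\frac{s}{n+1}+\cdots$ on the second term, which you noted and then dropped. With only the $(n+1)/s$ term and the $t$ from $dt$, the first-order correction to the $s$-integrand is $\frac{1}{2(n+1)}\bigl(\frac{1}{sf}-sf\bigr)=\frac{s\,g(s)}{2(n+1)f(s)}$, where $g(s)=\frac{1}{4\sinh^2(s/2)}$. This behaves like $\frac{\sqrt3}{(n+1)s}$ as $s\to 0$, so it is not integrable, and it decays exponentially rather than like $1/s^2$. This is exactly the cancellation at $s=0$ you warned about: the $(n+1)/s$ in the first term is killed by the $t^{-2}$ in the second. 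With that factor restored, the first-order correction vanishes identically. The clean way to see this is the exact identity
\[
t^2\rho_n(t)^2=\frac{1}{4\sinh^2\bigl(\frac{s}{2(n+1)}\bigr)}-\frac{(n+1)^2}{4\sinh^2(s/2)},
\]
which, with $dt/t=-ds/(2(n+1))$, gives
\[
\Exp N_n(\mathbb{R})=\frac{2}{\pi}\int_0^\infty\sqrt{\frac{1}{4(n+1)^2\sinh^2\bigl(\frac{s}{2(n+1)}\bigr)}-\frac{1}{4\sinh^2(s/2)}}\,ds .
\]
Here the first term under the root equals $\frac{1}{s^2}-\frac{1}{12(n+1)^2}+O\bigl(\frac{s^2}{(n+1)^4}\bigr)$, with no odd powers of $1/(n+1)$. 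So no correction integral and no boundary term contributes at order $1/n$. The output is $\frac{2}{\pi}\log\bigl(4(n+1)\bigr)+\frac{2}{\pi}C_0+O(n^{-2})$, and the entire $\frac{2}{n\pi}$ is $\frac{2}{\pi}\bigl(\log(n+1)-\log n\bigr)$. Your ``I expect it combines to give exactly $2/\pi$'' is carrying the argument, and as you set it up it would produce a divergent coefficient.

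The $O(n^{-2})$ remainder also needs a step you do not supply. In the exact form above, the relative corrections are $O(s^2/n^2)$. The growth like $s/n$ you worry about is an artifact of expanding $t$, $t^{-2}$ and $(n+1)/s$ separately. Even so, with a cut at $s=S$ each piece carries a term of size $S^2/n^2$:
\begin{itemize}
\item Up to errors $O(Se^{-S})$, the outer piece is $\frac{2}{\pi}\log\coth\frac{S}{4(n+1)}=\frac{2}{\pi}\bigl(\log\frac{4(n+1)}{S}+\frac{S^2}{48(n+1)^2}+\cdots\bigr)$.
\item The $-\frac{1}{12(n+1)^2}$ correction, integrated over $[0,S]$, contributes $-\frac{2}{\pi}\cdot\frac{S^2}{48(n+1)^2}$.
\end{itemize}
Estimating the two pieces separately with $S=n^{\delta}$ gives only $O(n^{2\delta-2})$, so you must exhibit this cancellation. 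Alternatively, avoid the moving cut. Subtract $\frac{1}{2(n+1)\sinh(s/(2(n+1)))}$ on $[1,\infty)$; its integral is exactly $\log\coth\frac{1}{4(n+1)}=\log\bigl(4(n+1)\bigr)+O(n^{-2})$. Then check that the remaining integrand converges in $L^1$ to $f(s)-\frac{1}{s}\chi_{(1,\infty)}(s)$ at rate $O(n^{-2})$. This is routine: for $s>1$ the difference is $O(s^3e^{-s}/n^2)$ when $s\le n$ and exponentially small in $n$ beyond.
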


While the distribution of roots for large-degree polynomials and the concentration phenomena have been extensively investigated in \cite{BharuchaReidSambandham1986,Rice1939,EdelmanKostlan1995,SheppVanderbei1995,ThangrajSambandham2021,DobribanKabluchko2011}, the probabilistic bounds for finite and low-degree polynomials remain largely unexplored. In the study of random polynomials, it is of significant interest to determine the likelihood that the roots of such polynomials are confined within a certain bound. More formally, consider the ensemble $\mathcal{P}_n$ of random polynomials of the form
\begin{equation}
P(z)=\sum_{k=0}^{n}a_k z^k.
\end{equation}
where the coefficients $a_k$ adhere to a given probability distribution. With respect to the uniform probability measure $d\mu_P$ on $\mathcal{P}_n$, we seek to ascertain lower bounds on the probability that a given positive constant $c$ serves as a bound for the roots of a polynomial randomly selected from $\mathcal{P}_n$. That is, we aim to determine the probability $p$ such that the roots of the polynomial $P(z)$ lie within the disc $|z|\leq c$ with at least probability $p$. Computing this exact probability is arguably a formidable problem and, to make a start, we propose and present a method to compute bounds for this probability.

\medskip
\noindent\textbf{Method and Motivation.}
Given a positive number $c$ and a random polynomial with independent and identically distributed standard Gaussian coefficients, the question of determining the probability that $c$ will act as the upper bound for the zeros may appear simple, especially for large $n$ in light of the circular laws \cite{SheppVanderbei1995}, which guarantee that for polynomials of high degree with independently and identically distributed coefficients the zeros tend to cluster around the unit circle with high probability. However, the convergence is probabilistic and the distribution of the zeros with the largest modulus is a different question. Moreover, the determination of that probability for finite $n$ is an absolutely non-trivial problem. However, it is possible to establish probabilistic bounds for this occurrence. These probabilistic zero bounds are potentially of crucial significance for understanding the typical root locations of finite-degree random polynomials as well as providing asymptotic root bounds for polynomials of high degree. In this paper, we introduce new findings on the probabilistic bounds for the roots of random polynomials whose coefficients are distributed according to the standard normal distribution by applying Cauchy's classic root bounds for complex polynomials. To that end, we propose an approach that can be adapted to a broad range of problems about the probabilistic bounds for roots of random polynomials. More specifically, the method consists in starting with any known deterministic bound of a polynomial, which generally happens to be some function of the coefficients, say $f(a_0,a_1,\ldots,a_n)$. For a random polynomial, the quantity $f(a_0,a_1,\ldots,a_n)$ is a random variable. The essence of the method is to use probabilistic methods and inference to elicit information about the location of zeros of the polynomial. It is worth noting that the proposed method can be adapted to a wide variety of problems in polynomial theory, and despite its straightforward adaptability it has the potential to deliver significant results to some otherwise very difficult problems regarding the location of zeros of random polynomials.

The remainder of the article is structured as follows. In Section~\ref{sec:prelim}, we present prerequisites in the form of definitions and lemmas. The main results regarding the probabilistic bounds for roots of polynomials with coefficients distributed as standard normal variables are presented in Section~\ref{sec:main}. In Section~\ref{sec:numerics}, we discuss the numerical simulations of the obtained results. This is followed by the conclusion in Section~\ref{sec:conclusion} and finally the suggested future work in Section~\ref{sec:future}.

\section{Some Preliminaries and Definitions}\label{sec:prelim}

In this section, we introduce the notion of probabilistic bound along with a formal definition and present a definition related to the transitivity of random variables as well as two lemmas for subsequent developments.

\begin{definition}[Probabilistic bound]
For a class $\mathcal{P}_n$ of random polynomials
\begin{equation*}
P(z)=\sum_{k=0}^{n}a_k z^k
\end{equation*}
with a prescribed distribution on its coefficients and degree $n$, let $d\mu_P$ be the uniform probability measure on $\mathcal{P}_n$. A positive number $c$ is a probabilistic bound for the roots with probability $p$ if the roots of a randomly picked polynomial in $\mathcal{P}_n$ with probability measure $d\mu_P$ lie within the disc $|z|\leq c$ with probability $p$.
\end{definition}

\begin{lemma}[Probabilistic transitivity]\label{lem:transitivity}
If $X(\omega)$ and $Y(\omega)$ are random variables such that $X(\omega)\leq Y(\omega)$ and the distribution function of $Y$ is $F(y)$, then for any positive number $c$,
\begin{equation*}
Y\leq c \quad \Longrightarrow \quad \Prob(X\leq c)\geq F(c).
\end{equation*}
\end{lemma}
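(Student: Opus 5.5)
The statement is informally phrased, so I will read it as: if $X\le Y$ pointwise on the underlying probability space $(\Omega,\mathcal{F},\Prob)$ and $F(y)=\Prob(Y\le y)$, then $\Prob(X\le c)\ge F(c)$ for every $c>0$. The displayed implication ``$Y\le c\Rightarrow\ldots$'' I take as shorthand for the event inclusion that drives the argument. The plan is an event-inclusion argument followed by monotonicity of $\Prob$.

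First I will fix $c>0$ and introduce the events
\begin{equation*}
A=\{\omega\in\Omega: Y(\omega)\le c\}, \qquad B=\{\omega\in\Omega: X(\omega)\le c\}.
\end{equation*}
Both are measurable because $X$ and $Y$ are random variables. Next I show $A\subseteq B$. If $\omega\in A$, then $X(\omega)\le Y(\omega)\le c$, so $\omega\in B$. This is exactly the pointwise content of ``$Y\le c$ implies $X\le c$''.

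Monotonicity of the probability measure then gives $\Prob(B)\ge\Prob(A)$. By the definition of the distribution function of $Y$, $\Prob(A)=\Prob(Y\le c)=F(c)$, so $\Prob(X\le c)\ge F(c)$, which is the claim.

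There is no real obstacle here. The only point needing care is the hypothesis $X\le Y$. If it holds only almost surely, I will replace $A$ by $A\cap\{X\le Y\}$, which differs from $A$ by a null set, and the same inequality follows. Positivity of $c$ is not used; it only reflects the intended application to root moduli. In that application $X=\max_j|z_j|$ and $Y$ is Cauchy's bound, a function $f(a_0,\ldots,a_n)$ of the coefficients. There one has to check that $X$ is measurable, which holds because the roots depend continuously on the coefficients away from the null set $\{a_n=0\}$.
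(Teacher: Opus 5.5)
Your proof is correct and is essentially the paper's argument: the paper also derives the event inclusion $\{Y\le c\}\subset\{X\le c\}$ from $X\le Y$, applies monotonicity of $\Prob$, and identifies $\Prob(Y\le c)$ with $F(c)$. Your additional remarks are correct refinements that the paper does not spell out: the almost-sure version of the hypothesis, the fact that $c>0$ is never used, and the measurability of the maximal root modulus.
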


\begin{proof}
Since $X(\omega)\leq Y(\omega)\leq c$, we have
\begin{equation*}
{Y(\omega)\leq c}\subset {X(\omega)\leq c}.
\end{equation*}
Hence,
\begin{equation*}
\Prob(X(\omega)\leq c)\geq \Prob(Y(\omega)\leq c).
\end{equation*}
As $\Prob(Y(\omega)\leq c)$ is precisely equal to $F(c)$, the lemma is established.
\end{proof}

\begin{lemma}[Cauchy's theorem for a monic polynomial]\label{lem:cauchy-monic}
The polynomial
\begin{equation*}
P(z)=z^n+a_{n-1}z^{n-1}+\cdots+a_0
\end{equation*}
has all its roots in the disc
\begin{equation}
|z|\leq 1+M, \qquad M=\max_{0\leq j\leq n-1}|a_j|.
\end{equation}
\end{lemma}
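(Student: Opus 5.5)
We argue by contradiction through the triangle inequality: we show that no point $z$ with $|z|>1+M$ can be a root of $P$. If $M=0$, then $P(z)=z^n$, all roots are at the origin, and the claim is trivial. So assume $M>0$.

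Fix $z$ with $|z|>1+M$; in particular $|z|>1$. Applying the triangle inequality to $P(z)=z^n+\sum_{j=0}^{n-1}a_jz^j$ gives
\begin{equation*}
|P(z)|\geq |z|^n-\sum_{j=0}^{n-1}|a_j||z|^j\geq |z|^n-M\sum_{j=0}^{n-1}|z|^j .
\end{equation*}
Since $|z|>1$, we can sum the geometric series:
\begin{equation*}
\sum_{j=0}^{n-1}|z|^j=\frac{|z|^n-1}{|z|-1}<\frac{|z|^n}{|z|-1}.
\end{equation*}
Substituting this bound yields
\begin{equation*}
|P(z)|>|z|^n\left(1-\frac{M}{|z|-1}\right).
\end{equation*}

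The hypothesis $|z|>1+M$ means exactly $|z|-1>M$. Hence $M/(|z|-1)<1$, the bracket is positive, and therefore $|P(z)|>0$. So $z$ is not a root, and every root of $P$ satisfies $|z|\leq 1+M$.

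The only delicate point is to keep the inequality strict once the geometric sum has been bounded. We handled this by dropping the $-1$ in the numerator, which gives a strict inequality that holds uniformly for every $z$ in the region $|z|>1+M$.
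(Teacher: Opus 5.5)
Your proof is correct. The paper gives no proof of this lemma (it only cites Rahman--Schmeisser), and your triangle-inequality plus geometric-series estimate is the standard argument for this bound.

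As an aside, the $M=0$ case split and the worry about strictness are unnecessary for the closed-disc statement, since the non-strict bound $|P(z)|\geq |z|^n(1-M/(|z|-1))$ already has a positive right-hand side when $|z|>1+M$. Your strict version does give the slightly sharper fact that no root lies on the circle $|z|=1+M$.
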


\begin{lemma}[Cauchy's theorem for a general polynomial]\label{lem:cauchy-general}
The roots of the polynomial
\begin{equation*}
P(z)=\sum_{i=0}^{n}a_i z^i, \qquad a_n\neq 0,
\end{equation*}
lie in the disc
\begin{equation}
|z|\leq 1+\max_{0\leq i\leq n-1}\left\{\left|\frac{a_i}{a_n}\right|\right\}.
\end{equation}
\end{lemma}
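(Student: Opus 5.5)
The plan is to reduce Lemma~\ref{lem:cauchy-general} to Lemma~\ref{lem:cauchy-monic} by normalizing. Since $a_n\neq 0$, the polynomial $Q(z)=P(z)/a_n=z^n+\sum_{i=0}^{n-1}(a_i/a_n)z^i$ is monic and has exactly the same roots as $P$. Applying Lemma~\ref{lem:cauchy-monic} to $Q$ with coefficients $b_i=a_i/a_n$ gives $|z|\le 1+\max_{0\le i\le n-1}|a_i/a_n|$ for every root, which is the claim.

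For completeness, I would also include the short proof of the monic case, since it carries all the content. Write $M=\max_{0\le j\le n-1}|a_j|$. If $M=0$ then $P(z)=z^n$ and every root is $0$, so the bound holds trivially. Otherwise, suppose $|z|>1$. Then
\begin{equation*}
|P(z)|\ \ge\ |z|^n-M\sum_{j=0}^{n-1}|z|^j\ =\ |z|^n-M\,\frac{|z|^n-1}{|z|-1}\ >\ |z|^n\Bigl(1-\frac{M}{|z|-1}\Bigr).
\end{equation*}
If moreover $|z|>1+M$, the bracket is positive, so $P(z)\neq0$. Hence every root satisfies $|z|\le 1+M$; roots with $|z|\le1$ already satisfy this bound.

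No step is a real obstacle. The only points needing care are the degenerate case $M=0$, which is handled separately above, and the strictness of the inequality in the geometric-sum estimate. That strictness comes from discarding the $-1$ in $|z|^n-1$, which is valid because $M>0$ and $|z|>1$.
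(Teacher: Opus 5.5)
Your proof is correct. The paper does not prove this lemma itself but defers to Rahman and Schmeisser, and your argument is the standard one behind that citation: divide through by $a_n$ to reduce to Lemma~\ref{lem:cauchy-monic}, then compare $|z|^n$ with the geometric sum $M\sum_{j<n}|z|^j$. As a minor simplification, neither the separate case $M=0$ nor the strict inequality is needed. The non-strict estimate $|P(z)|\ge |z|^n\bigl(1-M/(|z|-1)\bigr)$ holds for every $M\ge 0$ and $|z|>1$, and its right-hand side is already positive once $|z|>1+M$.
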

For the proof of Lemmas~\ref{lem:cauchy-monic} and~\ref{lem:cauchy-general}, one may consult \cite{RahmanSchmeisser2002}. For more details on the probabilistic preliminaries and distribution theory, one may consult any standard book on probability theory, such as \cite{Ross2014,Feller2008}.

\section{Main Results}\label{sec:main}

In the following theorems, we present results regarding the probabilistic bounds for roots of monic polynomials with coefficients distributed as standard normal variables.
\begin{theorem}\label{thm:monic-lower}
Consider the ensemble $\mathcal{P}_n$ of random polynomials
\begin{equation*}
P(z)=z^n+a_{n-1}z^{n-1}+\cdots+a_0
\end{equation*}
of degree $n$, and let $d\mu_P$ be the uniform probability measure on $\mathcal{P}_n$, where $a_i\sim N(0,1)$ are independently distributed standard normal random variables. Then a real number $c>1$ is a probabilistic bound for the roots of the polynomial $P(z)$, with probability at least $p$, if
\begin{equation}
c>\sqrt{2}\,\erf^{-1}\left(p^{1/n}\right)+1,
\end{equation}
where $\erf^{-1}$ is the inverse error function.
\end{theorem}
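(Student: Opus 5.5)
The plan is to combine Cauchy's bound for monic polynomials (Lemma~\ref{lem:cauchy-monic}) with probabilistic transitivity (Lemma~\ref{lem:transitivity}).

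\textbf{Step 1 (reduce to a random radius).} Let $R(\omega)$ denote the largest modulus of a root of $P(z)$. Lemma~\ref{lem:cauchy-monic} gives, pointwise in $\omega$,
\[
R(\omega)\le Y(\omega):=1+M,\qquad M=\max_{0\le j\le n-1}|a_j|.
\]
By Lemma~\ref{lem:transitivity},
\[
\Prob(R\le c)\ge \Prob(Y\le c)=\Prob(M\le c-1).
\]
It therefore suffices to compute the distribution function of $M$. This is also why $c>1$ is needed: for $c\le 1$ the event $M\le c-1$ is null or trivial.

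\textbf{Step 2 (distribution of the maximum).} For $a\sim N(0,1)$ and $t\ge0$,
\[
\Prob(|a|\le t)=\frac{2}{\sqrt{2\pi}}\int_0^t e^{-x^2/2}\,dx=\erf\!\left(\frac{t}{\sqrt2}\right),
\]
which follows from the substitution $x=\sqrt2\,u$ in the definition $\erf(s)=\frac{2}{\sqrt\pi}\int_0^s e^{-u^2}\,du$. The $n$ coefficients $a_0,\dots,a_{n-1}$ are independent, so the event $\{M\le t\}$ is the intersection of the events $\{|a_j|\le t\}$. Hence
\[
F_M(t)=\erf\!\left(\frac{t}{\sqrt2}\right)^{n}.
\]
Combining with Step 1,
\[
\Prob(R\le c)\ge \erf\!\left(\frac{c-1}{\sqrt2}\right)^{n}.
\]

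\textbf{Step 3 (invert).} We require $\erf\bigl((c-1)/\sqrt2\bigr)^n\ge p$. Since $\erf$ is continuous and strictly increasing on $[0,\infty)$ with range $[0,1)$, and $t\mapsto t^{1/n}$ is increasing, this is equivalent to
\[
\frac{c-1}{\sqrt2}\ge \erf^{-1}\!\left(p^{1/n}\right),
\qquad\text{that is,}\qquad
c\ge\sqrt2\,\erf^{-1}\!\left(p^{1/n}\right)+1.
\]
The stated strict inequality therefore suffices. Here we take $0<p<1$ so that $\erf^{-1}(p^{1/n})$ is finite. For $p=1$ the condition is vacuous, since no finite $c$ can satisfy it.

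\textbf{Main obstacle.} Nothing here is deep. The only points needing care are the $\erf$ normalization in Step 2, namely checking the factor $\sqrt2$ relating the normal CDF to $\erf$, and the monotonicity argument justifying the inversion in Step 3. Note also that the bound in Lemma~\ref{lem:cauchy-monic} holds for every realization of the coefficients, so Lemma~\ref{lem:transitivity} applies with no measure-zero exceptions. The event that the roots lie in $|z|\le c$ is measurable because $R$ is a continuous function of the coefficients.
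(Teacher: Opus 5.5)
Your proposal is correct and follows essentially the same route as the paper: Cauchy's bound for monic polynomials, the identity $\Prob(|a_k|\le t)=\erf(t/\sqrt2)$, independence to get $\erf\left(\frac{c-1}{\sqrt2}\right)^n$ as the distribution function of $1+M$ at $c$, and inversion of $\erf$. Your explicit attention to monotonicity in the inversion, the restriction $0<p<1$, and measurability tightens details that the paper leaves implicit.
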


\begin{proof}
Consider a polynomial $P(z)=z^n+a_{n-1}z^{n-1}+\cdots+a_0$ chosen with measure $d\mu_P$ from the ensemble $\mathcal{P}_n$, where $a_i\sim N(0,1)$ are independently distributed standard normal random variables. By Lemma~\ref{lem:cauchy-monic}, the polynomial has all its roots in the disc $|z|\leq 1+M$, where $M=\max_{0\leq j\leq n-1}|a_j|$.

Denote by $F_X(x)$ the distribution function of $a_k\sim N(0,1)$ and the distribution function of $Y=|a_k|$ by $F_Y(y)$. By the definition of the cumulative distribution function, for $y\geq0$,
\begin{align}
F_Y(y)&=\Prob(Y\leq y)\notag\\
&=\Prob(-y\leq a_k\leq y)\notag\\
&=\frac{1}{\sqrt{2\pi}}\int_{-y}^{y}e^{-t^2/2}\,dt\notag\\
&=F_X(y)-F_X(-y).
\end{align}
If $X\sim N(0,1)$, then the CDF $F_X(x)$ in terms of the error function is given by
\begin{equation}
F_X(x)=\frac{1}{2}\left(1+\erf\left(\frac{x}{\sqrt{2}}\right)\right),
\end{equation}
where the error function is
\begin{equation*}
\erf(x)=\frac{2}{\sqrt{\pi}}\int_0^x e^{-t^2}\,dt.
\end{equation*}
It follows that the distribution of $Y=|X|$ is given by
\begin{align}
F_Y(y)&=\frac{1}{2}\left[\left(1+\erf\left(\frac{y}{\sqrt{2}}\right)\right)-\left(1+\erf\left(\frac{-y}{\sqrt{2}}\right)\right)\right]\notag\\
&=\erf\left(\frac{y}{\sqrt{2}}\right).
\end{align}
Hence the distribution of $V=1+\max_{0\leq j\leq n-1}|a_j|$ is
\begin{equation}
F_V(y)=\left(F_Y(y-1)\right)^n
=\left(\erf\left(\frac{y-1}{\sqrt{2}}\right)\right)^n,
\qquad y\geq 1.
\end{equation}
Therefore, if $p$ is the probability that the roots of $P(z)$ are all less than a number $c$, then
\begin{equation}
p\geq \left(\erf\left(\frac{c-1}{\sqrt{2}}\right)\right)^n.
\end{equation}
Hence $c$ is a bound with probability at least $p$ if
\begin{equation}
\erf\left(\frac{c-1}{\sqrt{2}}\right)\geq p^{1/n}.
\end{equation}
Equivalently,
\begin{equation*}
c\geq \sqrt{2}\,\erf^{-1}\left(p^{1/n}\right)+1.
\end{equation*}
This completes the proof.
\end{proof}
\begin{remark}
Setting $p=0.99$ in Theorem~\ref{thm:monic-lower}, we see that $6.32$ will be a bound of the roots of $P(z)$ of degree $100000$, with at least $99$ percent probability.
\end{remark}

\begin{theorem}\label{thm:monic-upper}
Consider the ensemble $\mathcal{P}_n$ of random polynomials
\begin{equation*}
P(z)=z^n+a_{n-1}z^{n-1}+\cdots+a_0
\end{equation*}
of degree $n$, where the coefficients $a_i\sim N(0,1)$, for $i=0,1,\ldots,n-1$, are independently distributed standard normal random variables. Further, let $d\mu_P$ be the uniform probability measure on $\mathcal{P}_n$. The roots of a randomly selected polynomial $P(z)$ from this ensemble lie within the disc $|z|\leq c$ with probability at most
\begin{equation}
\erf\left(\frac{c^n}{\sqrt{2}}\right).
\end{equation}
\end{theorem}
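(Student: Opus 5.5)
The upper bound comes from Vieta's formula for the constant term rather than from a Cauchy-type bound. Since $P$ is monic with roots $z_1,\dots,z_n$ (counted with multiplicity), we have
\begin{equation*}
|a_0| = \prod_{k=1}^{n}|z_k|.
\end{equation*}
If all roots lie in $|z|\le c$, then $|a_0|\le c^n$. So the event ``all roots of $P$ lie in $|z|\le c$'' is contained in the event $\{|a_0|\le c^n\}$.

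I then apply the reverse form of the monotonicity underlying Lemma~\ref{lem:transitivity}. Write $R=\max_k|z_k|$, a random variable, and set $X=|a_0|^{1/n}$. Then $X\le R$ pointwise, because a product of $n$ nonnegative numbers is at most the $n$-th power of their maximum. Hence $\{R\le c\}\subset\{X\le c\}=\{|a_0|\le c^n\}$, which gives
\begin{equation*}
\Prob(R\le c)\le \Prob(|a_0|\le c^n).
\end{equation*}
This is exactly the inclusion argument of Lemma~\ref{lem:transitivity} with the roles of the two variables exchanged.

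It remains to evaluate the right-hand side, using the computation already carried out in the proof of Theorem~\ref{thm:monic-lower}. There it was shown that for $a_0\sim N(0,1)$ the distribution of $|a_0|$ is $F_Y(y)=\erf(y/\sqrt2)$ for $y\ge0$. Substituting $y=c^n$ gives $\Prob(|a_0|\le c^n)=\erf\left(c^n/\sqrt2\right)$, which is the stated bound. Note that only $a_0$ is used; the other coefficients play no role, so independence is not even needed here.

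There is no serious obstacle. The only points needing care are these:
\begin{itemize}
\item Roots must be counted with multiplicity, so that Vieta's identity $\prod|z_k|=|a_0|$ holds exactly.
\item The event $\{R\le c\}$ must be measurable. This holds because $R$ is a continuous function of the coefficients.
\end{itemize}
One might also remark that the bound is informative only when $c^n$ is small, since $\erf(c^n/\sqrt2)\to1$ rapidly for $c>1$.
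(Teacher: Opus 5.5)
Your proposal is correct and is essentially the paper's proof: Vieta's formula gives $\max_k|z_k|\ge |a_0|^{1/n}$, so $\Prob(\max_k|z_k|\le c)\le \Prob(|a_0|\le c^n)=\erf(c^n/\sqrt{2})$, using the distribution of $|a_0|$ computed in the proof of Theorem~\ref{thm:monic-lower}. Your remarks on multiplicity and measurability are sound extra care that the paper omits.
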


\begin{proof}
For the polynomial $P(z)=z^n+a_{n-1}z^{n-1}+\cdots+a_0$ chosen from the ensemble $\mathcal{P}_n$ with uniform measure $d\mu_P$, let $\xi_1,\xi_2,\ldots,\xi_n$ be the $n$ roots of $P(z)$. From Vieta's formula,
\begin{equation*}
|\xi_1\xi_2\cdots\xi_n|=|a_0|.
\end{equation*}
If $\xi_{\max}=\max_{1\leq i\leq n}|\xi_i|$, then
\begin{equation*}
\xi_{\max}^n\geq |\xi_1\xi_2\cdots\xi_n|=|a_0|,
\end{equation*}
which leads to
\begin{equation*}
\xi_{\max}\geq |a_0|^{1/n}.
\end{equation*}
For the random variable $W=|a_0|^{1/n}$, the distribution function $F_W(w)$ of $W$ is given by
\begin{equation}
F_W(w)=\Prob\left(|a_0|^{1/n}\leq w\right)=\Prob\left(|a_0|\leq w^n\right).
\end{equation}
Using the expression for the cumulative distribution function of the modulus of a standard normal variate, we obtain
\begin{equation}
F_W(w)=\erf\left(\frac{w^n}{\sqrt{2}}\right).
\end{equation}
Since $\xi_{\max}\geq W$, we have
\begin{equation}
\Prob(\xi_{\max}\leq c)\leq F_W(c).
\end{equation}
Hence
\begin{equation*}
\Prob(\xi_{\max}\leq c)\leq \erf\left(\frac{c^n}{\sqrt{2}}\right).
\end{equation*}
Therefore, the probability of all roots of $P(z)$ lying in $|z|\leq c$ is at most $\erf(c^n/\sqrt{2})$. This completes the proof.
\end{proof}

\begin{remark}
Theorem~\ref{thm:monic-lower} gives lower bounds for the probability that all the roots of $P(z)$ lie within $|z|\leq c$, while Theorem~\ref{thm:monic-upper} provides upper bounds for the same probability, thus giving a better idea about the probabilistic bounds for roots of monic polynomials with coefficients distributed as standard normal variables.
\end{remark}

In the next theorem, we obtain probabilistic bounds for roots of general polynomials with coefficients distributed as standard normal variables.

\begin{theorem}\label{thm:general}
Consider the ensemble $\mathcal{Q}_n$ of random polynomials
\begin{equation*}
Q(z)=\sum_{i=0}^{n}a_i z^i
\end{equation*}
of degree $n$, where the coefficients $a_i\sim N(0,1)$, for $i=0,1,\ldots,n$, are independently distributed standard normal random variables, and let $d\mu_Q$ be the uniform probability measure on $\mathcal{Q}_n$. A positive real number $c>1$ serves as a probabilistic bound for the roots of a randomly selected polynomial $Q(z)$ from this ensemble with probability at least $p$ if
\begin{equation}
c\geq 1+\tan\left(\frac{\pi}{2}p^{1/n}\right).
\end{equation}
\end{theorem}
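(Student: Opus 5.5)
The plan is to repeat the strategy of Theorem~\ref{thm:monic-lower}, but starting from Lemma~\ref{lem:cauchy-general}. Since $a_n\sim N(0,1)$, we have $a_n\neq 0$ almost surely. Lemma~\ref{lem:cauchy-general} then shows that all roots of $Q(z)$ lie in $|z|\le V$, where
\[
V=1+\max_{0\le i\le n-1}\left|\frac{a_i}{a_n}\right| .
\]
By Lemma~\ref{lem:transitivity}, applied with $X=\xi_{\max}$ (the largest root modulus) and $Y=V$, it suffices to show
\[
\Prob(V\le c)\ \ge\ \left(\tfrac{2}{\pi}\arctan(c-1)\right)^n .
\]
Once this holds, requiring the right-hand side to be at least $p$ and inverting the increasing function $\tfrac{2}{\pi}\arctan$ on $[0,\infty)$ gives exactly $c\ge 1+\tan\left(\tfrac{\pi}{2}p^{1/n}\right)$.

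The one-variable ingredient is that, for independent $a_i,a_n\sim N(0,1)$, the ratio $a_i/a_n$ is standard Cauchy. Hence
\[
\Prob\left(\left|\frac{a_i}{a_n}\right|\le t\right)=\frac{2}{\pi}\arctan t,\qquad t\ge 0 .
\]
I would either quote this fact or verify it by conditioning on $a_n$ and integrating, in polar coordinates, over the region $|a_i|\le t|a_n|$ in the plane. That region is a double sector of total angle $4\arctan t$, so the rotation invariance of the bivariate standard Gaussian gives the probability $4\arctan t/(2\pi)$ directly.

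The main obstacle is that the $n$ ratios $a_i/a_n$ share the common denominator $a_n$, so they are \emph{not} independent. One therefore cannot simply raise the single-ratio CDF to the $n$-th power, as was done for $|a_j|$ in Theorem~\ref{thm:monic-lower}. I would handle this by conditioning on $a_n$. Given $a_n$, the variables $|a_i|$ for $0\le i\le n-1$ are independent, and the event $\{|a_i/a_n|\le t\}$ has conditional probability $\erf\left(t|a_n|/\sqrt{2}\right)$. Writing $g(a_n)=\erf\left(t|a_n|/\sqrt2\right)$, this gives
\[
\Prob\left(\max_{i<n}\left|\frac{a_i}{a_n}\right|\le t\right)=\Exp\left[g(a_n)^n\right] .
\]
Since $x\mapsto x^n$ is convex on $[0,1]$, Jensen's inequality yields
\[
\Exp\left[g(a_n)^n\right]\ge\left(\Exp\left[g(a_n)\right]\right)^n .
\]
By the one-variable computation above, $\Exp\left[g(a_n)\right]=\tfrac{2}{\pi}\arctan t$.

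Taking $t=c-1>0$ then gives $\Prob(V\le c)\ge\left(\tfrac{2}{\pi}\arctan(c-1)\right)^n$, which is precisely the inequality needed in the first step. This lower bound is exactly what the stated theorem requires, so the dependence among the ratios only helps, and the conclusion follows.
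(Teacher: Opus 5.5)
Your proof is correct, and at the crucial step it differs from the paper's argument and is more careful. The paper combines Lemma~\ref{lem:cauchy-general} with Lemma~\ref{lem:ratio} and then writes $F_Y(y)=\left[\frac{2}{\pi}\tan^{-1}(y-1)\right]^n$ for $Y=1+\max_{i<n}|a_i/a_n|$. In other words, it raises the marginal CDF of a single ratio to the $n$-th power, exactly as in Theorem~\ref{thm:monic-lower}. This tacitly treats the ratios $a_i/a_n$ as independent, which, as you point out, they are not, since they share the denominator $a_n$.

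Your conditioning on $a_n$ gives the true distribution, $\Prob(Y\le y)=\Exp\bigl[\erf\bigl((y-1)|a_n|/\sqrt{2}\bigr)^n\bigr]$. Jensen then shows that the paper's product formula is only a lower bound, and strictly so for $n\ge 2$, since $\erf(t|a_n|/\sqrt{2})$ is non-degenerate and $x\mapsto x^n$ is strictly convex. Because the paper's error runs in the favorable direction, the theorem as stated survives. But it is your argument that actually proves the inequality $\Prob(Y\le c)\ge\left(\frac{2}{\pi}\arctan(c-1)\right)^n$ on which the threshold $c\ge 1+\tan\left(\frac{\pi}{2}p^{1/n}\right)$ rests.

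The paper's route is shorter because it reuses the template of Theorem~\ref{thm:monic-lower}, but the step it skips is not valid as written. Your route gives a correct proof and also the exact CDF of the random Cauchy bound. Evaluated numerically, that CDF would yield a smaller admissible $c$ than the closed-form threshold.
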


Before proceeding with the proof of the theorem, we establish the following lemma.

\begin{lemma}[Distribution of the modulus of the ratio of two standard normal variables]\label{lem:ratio}
Suppose $X_1$ and $X_2$ are two independently and identically distributed standard normal random variables and
\begin{equation*}
W=\left|\frac{X_1}{X_2}\right|.
\end{equation*}
Then $W$ has CDF
\begin{equation}
F_W(w)=\frac{2}{\pi}\tan^{-1}(w), \qquad w\geq0.
\end{equation}
\end{lemma}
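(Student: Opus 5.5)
The plan is to compute the distribution of the ratio $R=X_1/X_2$ directly and then read off the distribution of its modulus. This avoids quoting the standard fact that $R$ is standard Cauchy. By symmetry ($X_1$ and $-X_1$ have the same law and are independent of $X_2$), $R$ is symmetric about $0$. Hence, for $w\ge 0$,
\begin{equation*}
F_W(w)=\Prob(-w\le R\le w)=2F_R(w)-1 .
\end{equation*}
So it suffices to show $F_R(w)=\tfrac12+\tfrac1\pi\tan^{-1}(w)$.

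The most transparent route is polar coordinates. Write $(X_1,X_2)=(\rho\sin\Theta,\rho\cos\Theta)$. Because the joint density $\frac{1}{2\pi}e^{-(x_1^2+x_2^2)/2}$ is rotationally invariant, $\Theta$ is uniform on $[0,2\pi)$ and independent of $\rho$. Also $\{X_2=0\}$ is a null set, so $W$ is almost surely well defined. Then $W=|\tan\Theta|$, and the event $\{W\le w\}$ corresponds to $\Theta$ lying within angle $\tan^{-1}(w)$ of either $0$ or $\pi$. That is four arcs of length $\tan^{-1}(w)$ each, so
\begin{equation*}
F_W(w)=\frac{4\tan^{-1}(w)}{2\pi}=\frac{2}{\pi}\tan^{-1}(w).
\end{equation*}

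As a cross-check, or as the primary argument if a purely computational proof is preferred, one can condition on $X_2$. Since $|X_1|$ and $|X_2|$ are independent with density $\sqrt{2/\pi}\,e^{-t^2/2}$ on $t>0$, we have
\begin{equation*}
F_W(w)=\int_0^\infty \sqrt{\tfrac{2}{\pi}}e^{-s^2/2}\,\erf\!\left(\frac{ws}{\sqrt2}\right)ds .
\end{equation*}
Differentiating under the integral in $w$ gives $\frac{2}{\pi}\int_0^\infty s\,e^{-s^2(1+w^2)/2}\,ds=\frac{2}{\pi}\frac{1}{1+w^2}$. Together with $F_W(0)=0$, this integrates to $\frac{2}{\pi}\tan^{-1}(w)$.

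The only delicate points are justifying the independence and uniformity of $\Theta$ (rotational invariance of the Gaussian), or, in the second route, the differentiation under the integral sign. The latter is justified by dominated convergence, since the integrand's $w$-derivative is bounded by $C s e^{-s^2/2}$. I expect no substantive obstacle beyond this bookkeeping.
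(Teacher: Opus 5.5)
Your argument is correct. It differs from the paper's mainly in where the work is done.

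The paper quotes the standard fact that $X_1/X_2$ is standard Cauchy, with CDF $\frac{1}{\pi}\tan^{-1}(y)+\frac12$, and then evaluates $F_Y(w)-F_Y(-w)$. That is exactly your symmetry reduction $F_W(w)=2F_R(w)-1$, with the Cauchy law taken as known. You instead derive the law from scratch. The polar argument gives $F_W$ directly by counting the four arcs, which makes the preliminary reduction to $F_R$ unnecessary; only the uniformity of $\Theta$ is used, not its independence from $\rho$. The conditioning route works with the half-normal variables $|X_1|,|X_2|$ and never passes through $R$ at all. The paper's version is a two-line computation resting on a cited fact; yours is self-contained.

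The conditioning representation also helps with how the lemma is used later. In Theorem~\ref{thm:general} the ratios $|a_i/a_n|$ share the denominator $a_n$, so they are not independent. Conditioning on $a_n$ gives the correct joint probability $\Prob\bigl(\max_{0\le i\le n-1}|a_i/a_n|\le w\bigr)=\Exp\bigl[\erf(w|a_n|/\sqrt{2})^n\bigr]$. By Jensen's inequality this is at least $\bigl(\frac{2}{\pi}\tan^{-1}(w)\bigr)^n$. So the paper's lower bound still holds, even though the $n$th power of the lemma's CDF is not the distribution of the maximum.
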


\begin{proof}
Let $X_1,X_2\sim N(0,1)$ be independent and set
\begin{equation*}
W=\left|\frac{X_1}{X_2}\right|.
\end{equation*}
The random variable $Y=X_1/X_2$, being the ratio of two standard normal variates, follows a standard Cauchy distribution with CDF
\begin{equation}
F_Y(y)=\frac{1}{\pi}\tan^{-1}(y)+\frac{1}{2}.
\end{equation}
Hence the distribution of $W=|Y|$ can be computed as
\begin{align*}
F_W(w)&=\Prob(W\leq w)\\
&=\Prob(|Y|\leq w)\\
&=\Prob(-w\leq Y\leq w)\\
&=F_Y(w)-F_Y(-w)\\
&=\frac{1}{\pi}\left[\tan^{-1}(w)-\tan^{-1}(-w)\right]\\
&=\frac{2}{\pi}\tan^{-1}(w),
\end{align*}
which establishes the lemma.
\end{proof}

\begin{proof}[Proof of Theorem~\ref{thm:general}]
For the polynomial $Q(z)=\sum_{i=0}^{n}a_i z^i$, with $a_i\sim N(0,1)$, Lemma~\ref{lem:cauchy-general} gives the random Cauchy bound
\begin{equation*}
Y=1+\max_{0\leq i\leq n-1}\left\{\left|\frac{a_i}{a_n}\right|\right\}.
\end{equation*}
By Lemma~\ref{lem:ratio}, the distribution of each ratio modulus is given by
\begin{equation*}
F_W(w)=\frac{2}{\pi}\tan^{-1}(w).
\end{equation*}
Thus the induced bound is evaluated through
\begin{equation*}
F_Y(y)=\left[\frac{2}{\pi}\tan^{-1}(y-1)\right]^n.
\end{equation*}
In view of Lemma~\ref{lem:transitivity}, a number $c$ is a bound with probability at least
\begin{equation*}
p\geq \left[\frac{2}{\pi}\tan^{-1}(c-1)\right]^n.
\end{equation*}
Equivalently,
\begin{equation*}
\frac{2}{\pi}\tan^{-1}(c-1)\geq p^{1/n},
\end{equation*}
or
\begin{equation*}
\tan^{-1}(c-1)\geq \frac{\pi}{2}p^{1/n}.
\end{equation*}
This leads to
\begin{equation*}
c\geq 1+\tan\left(\frac{\pi}{2}p^{1/n}\right).
\end{equation*}
This completes the proof.
\end{proof}

\begin{remark}
Substituting $n=5$ and $p=0.99$ in Theorem~\ref{thm:general}, we find that $13.91$ acts as a bound for the roots of polynomial $Q(z)$ with probability at least $0.99$.
\end{remark}
\section{Discussion of Numerical Simulation Results}\label{sec:numerics}

A numerical study of the formula in Theorem~\ref{thm:monic-lower} reveals that for a monic random polynomial of degree $5$ with coefficients as independently and identically distributed standard random variables, $c=2$ is a bound with probability at least $14$ percent. While the actual probability of $91$ percent makes that a poor estimate, we also observe from a comparison of the graphs in Figs.~\ref{fig:P-theory} and~\ref{fig:P-monte-carlo} that the lower bound on the probability of $c$ to bound the roots of $P(z)$ improves very rapidly as we increase the value of $c$. For $c=3$, the lower bound of probability from Theorem~\ref{thm:monic-lower} is $62$ percent, while the actual probability computed from simulation is $98$ percent, which is a sharp improvement. Taking $c=6.32$, we find from Theorem~\ref{thm:monic-lower} that $c=6.32$ is a bound of the roots with probability at least $99$ percent. Numerical simulations also reveal that the average of lower and upper bounds given by Theorem~\ref{thm:monic-lower} and Theorem~\ref{thm:general} is a much better estimate for the actual probability computed from simulations. It is also interesting to mention that the lower bounds of probability by Theorem~\ref{thm:monic-upper} give much better estimates for the actual probability in comparison to those given by Theorem~\ref{thm:monic-lower} for $c=2$.

Figures~\ref{fig:Q-theory} and~\ref{fig:Q-monte-carlo} depict the probability of a number being a bound of $Q(z)$ from Theorem~\ref{thm:general} and Monte Carlo simulations, respectively, while Figs.~\ref{fig:P-max-root} and~\ref{fig:Q-max-root} show the distribution of the root with maximum modulus for random polynomials $P(z)$ and $Q(z)$. The distribution of the zeros of a random non-monic polynomial with independent and identically distributed normal coefficients with degree $1000$ has been depicted in Fig.~\ref{fig:unit-circle}. Finally, Fig.~\ref{fig:expected-max-cauchy} shows the expected modulus of the largest root for polynomials of degrees $1$ through $20$, represented by the red line with circle markers. The expected Cauchy bound for these polynomials is represented by the blue line with square markers. The expected modulus of the largest root tends to increase with the degree of the polynomial. This trend suggests that higher-degree polynomials are more likely to have roots with larger moduli. The expected Cauchy bound, which is a theoretical upper bound on the modulus of all roots, also increases with the degree of the polynomial. However, it appears to grow at a faster rate than the actual expected modulus of the largest root, which indicates that while the Cauchy bound is a useful estimate, it is not tight. Though the Cauchy bound serves as a ready bound for the zeros of a polynomial, for more precise estimations or sharper bounds, one would likely need to rely on numerical simulations or more refined theoretical bounds tailored to the specific properties of the coefficients of the polynomials.

\begin{figure}[H]
\centering
\begin{minipage}[b]{0.45\textwidth}
\centering
\includegraphics[width=0.95\linewidth]{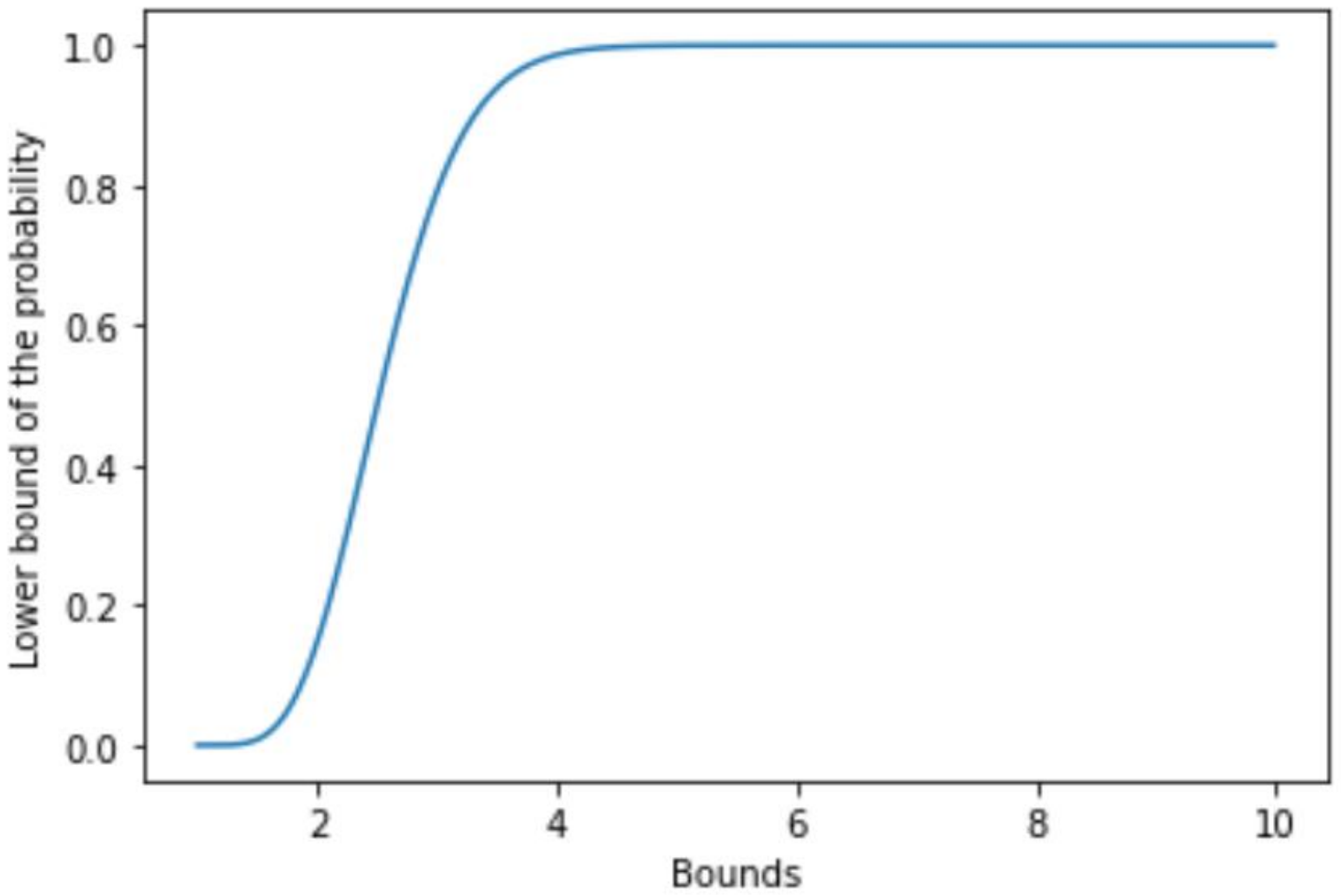}
\caption{Simulation of probabilistic bounds for roots of $P(z)$ for $n=5$ by Theorem~\ref{thm:monic-lower}.}
\label{fig:P-theory}
\end{minipage}\hfill
\begin{minipage}[b]{0.45\textwidth}
\centering
\includegraphics[width=0.95\linewidth]{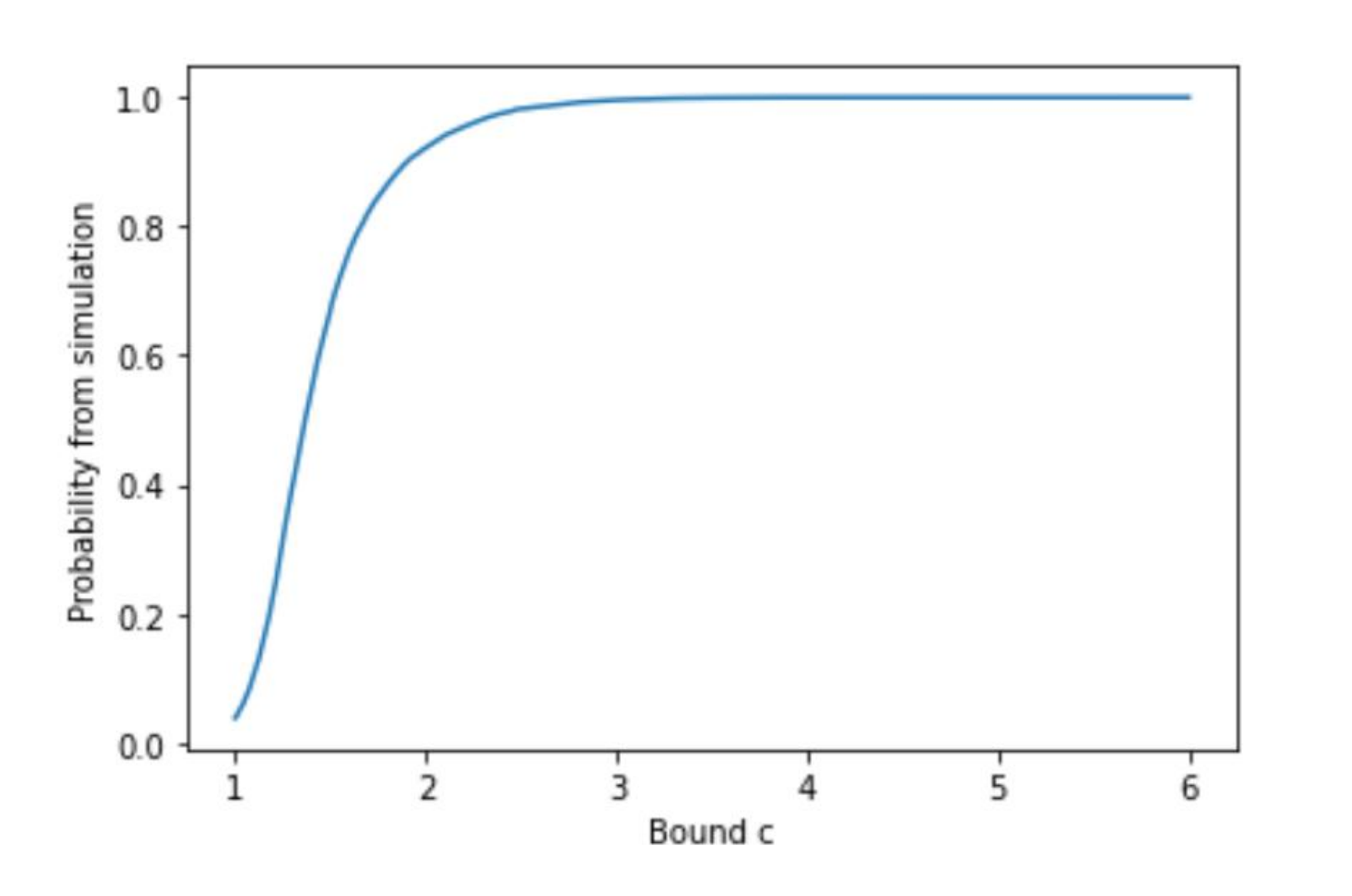}
\caption{Probabilistic bounds for roots of $P(z)$ for $n=5$ by Monte Carlo simulation.}
\label{fig:P-monte-carlo}
\end{minipage}
\end{figure}

\begin{figure}[H]
\centering
\begin{minipage}[b]{0.45\textwidth}
\centering
\includegraphics[width=0.95\linewidth]{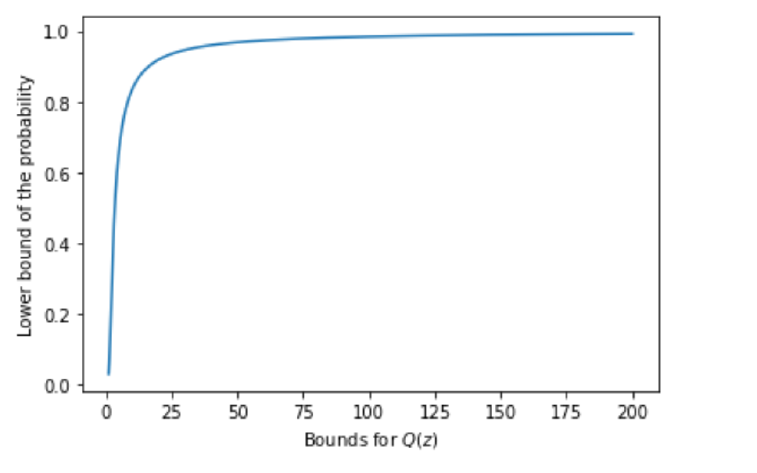}
\caption{Simulation of probabilistic bounds for roots of $Q(z)$ for $n=5$ by Theorem~\ref{thm:general}.}
\label{fig:Q-theory}
\end{minipage}\hfill
\begin{minipage}[b]{0.45\textwidth}
\centering
\includegraphics[width=0.95\linewidth]{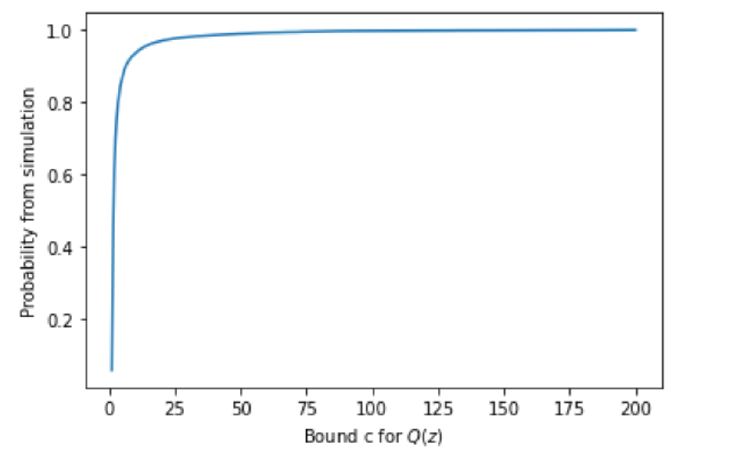}
\caption{Probabilistic bounds for roots of $Q(z)$ for $n=5$ by Monte Carlo simulation.}
\label{fig:Q-monte-carlo}
\end{minipage}
\end{figure}

\begin{figure}[H]
\centering
\begin{minipage}[b]{0.45\textwidth}
\centering
\includegraphics[width=0.95\linewidth]{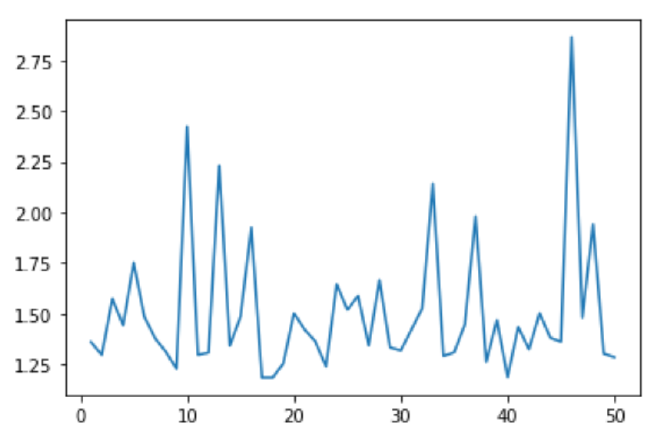}
\caption{Distribution of the root with maximum modulus for $P(z)$.}
\label{fig:P-max-root}
\end{minipage}\hfill
\begin{minipage}[b]{0.45\textwidth}
\centering
\includegraphics[width=0.95\linewidth]{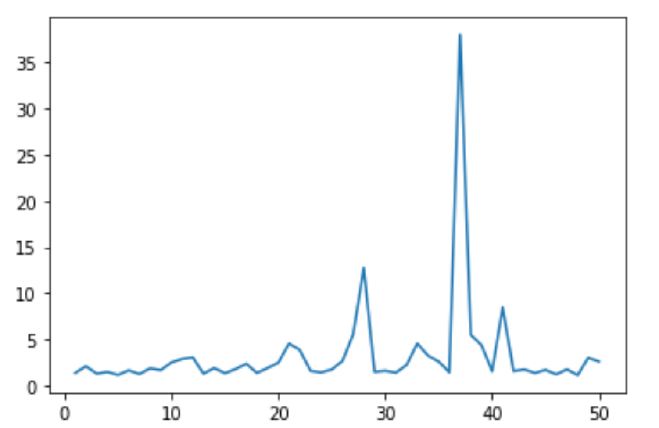}
\caption{Distribution of the root with maximum modulus for $Q(z)$.}
\label{fig:Q-max-root}
\end{minipage}
\end{figure}

\begin{figure}[H]
\centering
\begin{minipage}[b]{0.45\textwidth}
\centering
\includegraphics[width=0.95\linewidth]{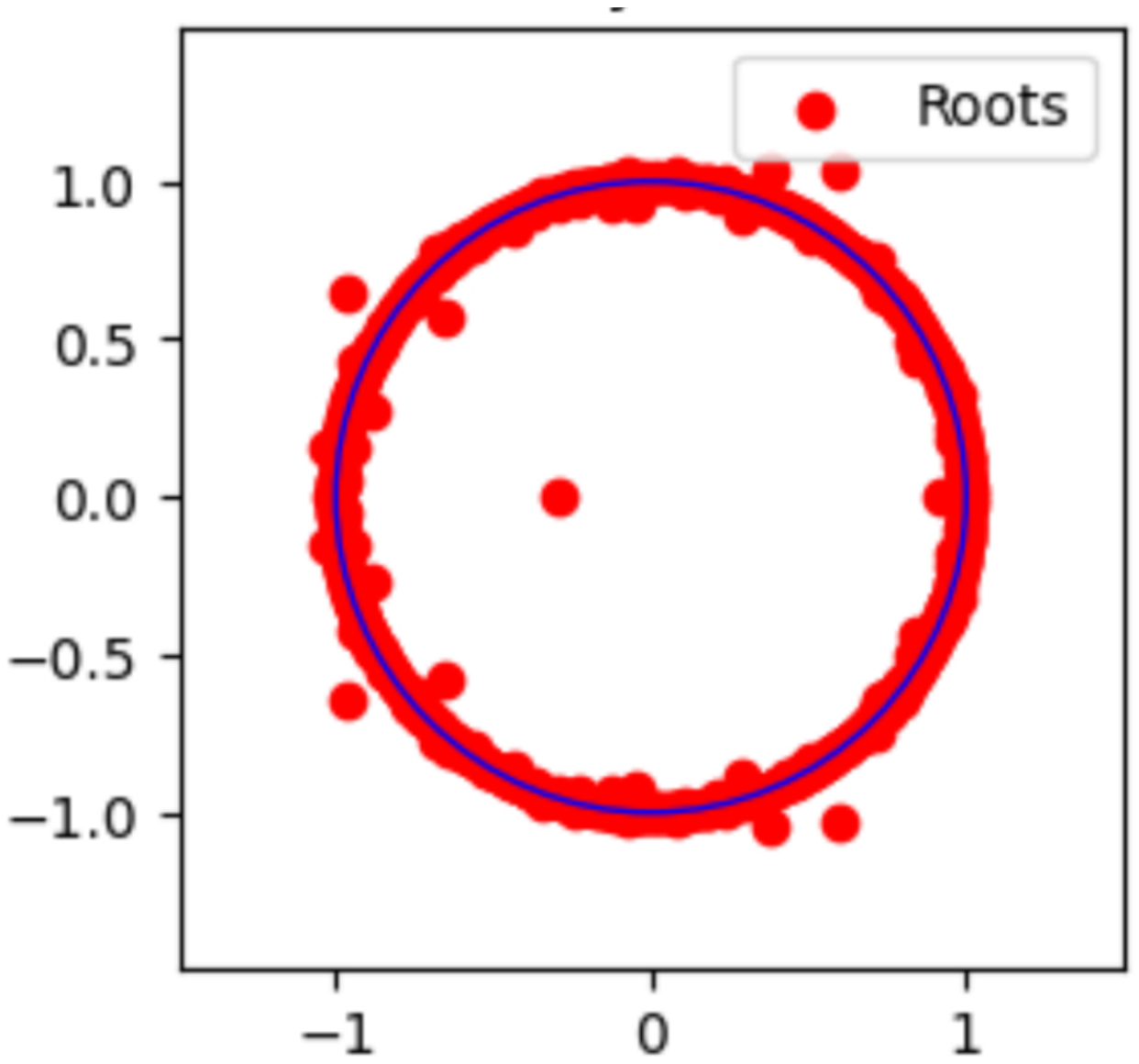}
\caption{Distribution of the roots with respect to the unit circle for $Q(z)$.}
\label{fig:unit-circle}
\end{minipage}\hfill
\begin{minipage}[b]{0.45\textwidth}
\centering
\includegraphics[width=0.95\linewidth]{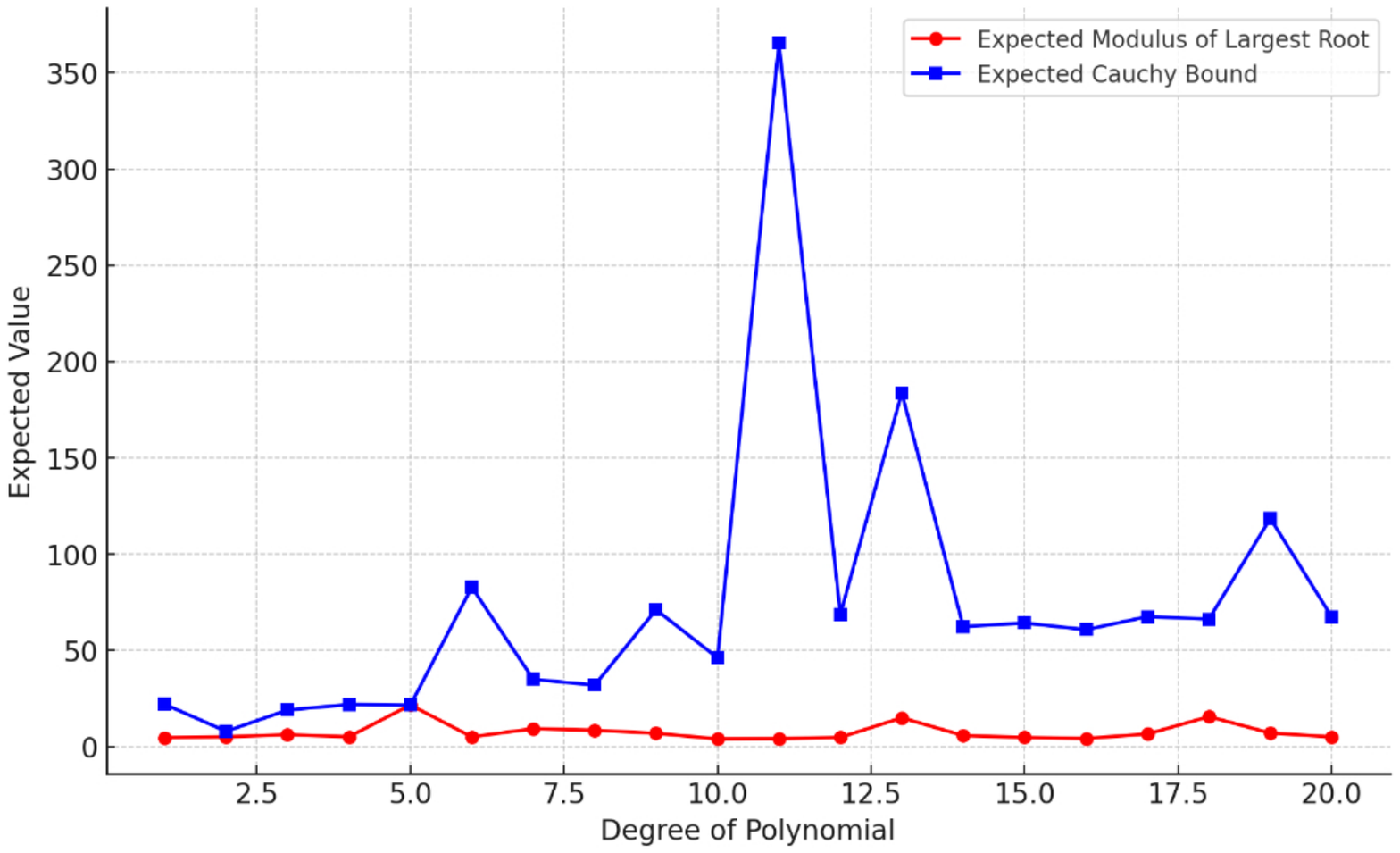}
\caption{Distribution of the expected maximum modulus and Cauchy bound for $Q(z)$.}
\label{fig:expected-max-cauchy}
\end{minipage}
\end{figure}

\section{Conclusion}\label{sec:conclusion}

In this paper, we introduce the notion of probabilistic zero bounds and, through a novel yet simple method, illustrate its application by deriving some interesting theorems utilizing the classical bounds for zeros of a polynomial due to Cauchy. Our investigation into the probabilistic bounds of roots of random polynomials with independent and identically distributed standard normal coefficients has yielded concrete results that contribute to the existing body of knowledge on the distribution of zeros in random polynomials, exemplifying how classical mathematical concepts, such as Cauchy's theorem, can be leveraged in a probabilistic framework to yield insights into the behavior of complex random polynomials. The expected number of zeros within the unit disk is derived, adding to our understanding of the distribution of zeros for polynomials with random coefficients through an elegant symmetry argument. The methodological approach adopted in this paper---utilizing Cauchy's theorem and probabilistic logic to infer the locations of zeros---is noteworthy for its simplicity and effectiveness. This approach could potentially be generalized to a wider class of problems in polynomial theory, offering a robust toolset for tackling questions about root locations in random polynomial settings.

\section{Future Work}\label{sec:future}

The future work stemming from this paper's methodology opens several promising avenues for exploration. As a method paper that innovates upon the classical bounds for the zeros of polynomials, we envision extending the probabilistic zero-bounds approach to other well-known bounds beyond those of Cauchy. Several classical bounds, such as the ones established by Kalantari \cite{Kalantari2005}, Dehmer and Mowshowitz \cite{DehmerMowshowitz2011}, Datt and Govil \cite{DattGovil1978}, and a plethora of other bounds offer different perspectives on the locations of polynomial zeros. Applying the probabilistic framework developed in this paper to these bounds could yield new insights into the distribution of zeros for random polynomials with various distributions of coefficients.

\section*{Conflict of Interests}
The authors have no conflict of interest to declare.

\section*{Data Availability}
The authors declare that they have not used any third-party or copyrighted data in their simulations and all the programs used can be made available on request.

\end{document}